\theoremstyle{plain}
\newtheorem{theorem}{Theorem}
\newtheorem{corollary}{Corollary}
\newtheorem{lemma}{Lemma}
\theoremstyle{definition}
\title{Triangulable Leibniz Algebras}
\author{Tiffany Burch and Ernie Stitzinger}
\begin{document}
\maketitle

ABSTRACT: A converse to Lie's theorem for Leibniz algebras is found and generalized. The result is used to find cases in which the generalized property, called triangulable, is 2-recognizeable; that is, if all 2-generated subalgebras are triangulable, then the algebra is also. Triangulability joins solvability, supersolvability, strong solvability, and nilpotentcy as a 2-recognizeable property for classes of Leibniz algebras.

\vspace{5mm}

Simultaneous triangulation of a set of linear transformations related to Leibniz algebras will be considered in this work. A necessary condition is that the minimum polynomials of the linear transformations are the products of linear factors. If the matrices commute, then this condition is sufficient. Lie's theorem is a generalization of this result, a result that is field dependent even in the algebraically closed case. Lie's theorem extends to Leibniz algebras. It can also be useful to have a condition on the set which guarantees simultaneous triangulation when extending to the algebraic closure, $K$, of the field of scalars, $F$. We will call this property, borrowing from Bowman, Towers and Varea \cite{Bowman}, triangulable. Thus a Leibniz algebra $A$ over a field $F$ is triangulable on module $M$ if the induced representation of $K \bigotimes A$ on $K \bigotimes M$ admits a basis such that the representing matrices are upper triangular.   We find a condition on the Leibniz algebra that is equivalent to being triangulable. The condition is independent of the field and includes Lie's theorem. The condition is on $A$ although triangulable refers to the action in the algebra that is over $K$. As an application, we consider to what extent an algebra is triangulable if all 2-generated subalgebras have the property. Such a property is called 2-recognizable, a concept investigated in groups, Lie algebras and Leibniz algebras for classes such as solvable, supersolvable, strongly solvable and, trivially, nilpotent and abelian. For an introduction to Leibniz algebras, see \cite{Barnes}, \cite{Demir} and \cite{Loday}.

\vspace{5mm}

Let $M$ be a module for the Leibniz algebra $A$. Let left and right multiplication by $x \in A$ be denoted by $T_x$ and $S_x$, respectively. If $T_x$ is nilpotent, then $S_x$ is also nilpotent and an Engle theorem holds; that is, $A$ acts nilpotently on $M$ \cite{Bosko}. Define $A$ to be nil on $M$ if this property holds. If $I$ is an ideal of a Leibniz algebra $A$, then $I$ is nil on $A$ if and only if $I$ is nilpotent. We will show that $A$ is triangulable on itself if and only if $A^2$ is nilpotent. When $A^2$ is nilpotent, then $A$ is called strongly solvable, a term introduced in \cite{Bowman}.

\vspace{5 mm}

\begin{lemma}\label{Center}
Let $B$ be an irreducible submodule of the $A$-module $M$ such that dim$(B) = 1$. Then $A^2 \subseteq C_A(B)=\{a \in A | aB=0=Ba\}$.
\end{lemma}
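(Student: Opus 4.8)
The plan is to exploit the fact that a one-dimensional submodule forces every multiplication operator to act as a scalar, so that any operator expressible as a commutator must vanish. Fix a nonzero $b \in B$, so $B = Fb$. Since $B$ is a submodule, for each $a \in A$ both $T_a$ and $S_a$ preserve $B$, and because $\dim(B)=1$ they act on $b$ by scalars: write $T_a b = \alpha(a)\, b$ and $S_a b = \beta(a)\, b$ for suitable scalars $\alpha(a), \beta(a)$. As $A^2$ is spanned by brackets $[x,y]$, it suffices to show $[x,y] \in C_A(B)$ for all $x,y \in A$, that is, $T_{[x,y]} b = 0$ and $S_{[x,y]} b = 0$.

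First I would record the operator identities coming from the module axioms. In a Leibniz module the left multiplications satisfy $T_{[x,y]} = T_x T_y - T_y T_x$, while the right multiplications satisfy a companion identity of commutator type, $S_{[x,y]} = T_x S_y - S_y T_x$. These are precisely the relations exhibited by the adjoint representation in the case $M = A$, and they persist for an arbitrary module; the exact mixed form depends on the left/right convention in force, but in every case it is a Lie bracket of the operators attached to $x$ and $y$.

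The computation is then immediate. Evaluating on $b$ and using that scalars commute, one gets $T_{[x,y]} b = \alpha(x)\alpha(y)\, b - \alpha(y)\alpha(x)\, b = 0$ and likewise $S_{[x,y]} b = \alpha(x)\beta(y)\, b - \beta(y)\alpha(x)\, b = 0$. Hence both actions of $[x,y]$ annihilate $B$, so $[x,y] \in C_A(B)$; taking linear combinations yields $A^2 \subseteq C_A(B)$.

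The only delicate point, and hardly an obstacle, is bookkeeping the correct commutator form of the right-multiplication identity, since this is the convention-sensitive ingredient; but whatever the convention, $S_{[x,y]}$ is a bracket of operators among $\{T_x, T_y, S_x, S_y\}$, all of which are scalars on the one-dimensional space $B$, so it vanishes regardless. It is worth remarking that the irreducibility hypothesis is automatic once $\dim(B)=1$, so the argument uses only that $B$ is a one-dimensional submodule; the same reasoning applies verbatim after extending scalars to $K$.
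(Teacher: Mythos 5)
Your proof is correct, and the left-action half is identical to the paper's: evaluate $T_{[x,y]} = T_xT_y - T_yT_x$ on $b$, where both operators are scalars, and conclude $T_{[x,y]}b = 0$. Where you genuinely diverge is in handling the right action. The paper does not compute $S_{[x,y]}b$ at all; instead it invokes Barnes's structure result for irreducible Leibniz bimodules --- that either $S_x = 0$ for all $x$ or $S_x = -T_x$ for all $x$ --- and in either case $b(xy) = 0$ follows immediately from $(xy)b = 0$. You instead use the mixed bimodule axiom $S_{[x,y]} = T_xS_y - S_yT_x$ directly and observe that all four operators are scalars on the one-dimensional $B$, so the commutator vanishes. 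This is more self-contained (no appeal to the classification of irreducible bimodules) and, as you note, it exposes that irreducibility is automatic once $\dim(B)=1$; the paper's citation of Barnes is heavier machinery than the lemma requires. Both arguments are valid.

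One small caution on your hedge that ``whatever the convention, $S_{[x,y]}$ is a bracket of operators'': that is not literally true. The Leibniz bimodule axioms give \emph{two} expressions for $S_{[x,y]}$, the mixed one $T_xS_y - S_yT_x$ that you use, and a second one of the form $S_yS_x + T_xS_y$ (the ``module-first'' identity), which is a \emph{sum}, not a commutator, and does not vanish by the scalars-commute argument alone. Your proof is fine because you commit to the genuine commutator identity, which is indeed one of the defining axioms; but had you reached for the other axiom, you would have needed the consequence $S_y(S_x + T_x) = 0$ --- essentially Barnes's dichotomy again --- to finish. So the convention-independence claim should be dropped or stated more carefully, even though it does not affect the correctness of the argument you actually give.
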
 

\begin{proof}
Let $x, y \in A$, $b \in B$ with $xb=\alpha_x b$ where $\alpha_x$ is a scalar. Then $(xy)b=x(yb)-y(xb)=x(\alpha_y b)-y(\alpha_x b)= \alpha_x \alpha_y b -\alpha_y \alpha_x b=0$. Since $B$ is irreducible, it follows that $b(xy)=0$ (\cite{Barnes}). Hence $A^2 B=B A^2=0$ and the result holds.
\end{proof}

\vspace{5mm}

\begin{theorem}\label{ThmBasis}
Let $A$ be a Leibniz algebra and $M$ be an $A$-module. There is a basis for $M$ such that the representing matrices  are in upper triangular form if and only if the linear transformations in the representation have minimum polynomials with all linear factors and $A^2$ is nil on $M$.
\end{theorem}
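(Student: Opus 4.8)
The plan is to prove the two implications separately, disposing of the forward direction by a direct matrix computation and proving the converse by induction on $\dim M$, where the inductive step reduces to producing a one-dimensional sub-bimodule, i.e.\ a common eigenvector for the whole representation. For the forward direction, suppose a basis has been chosen making every $T_x$ and $S_x$ upper triangular. Each representing matrix then has its eigenvalues on the diagonal in $F$, so its characteristic polynomial, and hence its minimum polynomial, is a product of linear factors over $F$. Moreover $T_{xy}=T_xT_y-T_yT_x$ by the identity used in Lemma~\ref{Center}, and the commutator of two upper triangular matrices is strictly upper triangular; thus $T_z$ is nilpotent for every $z\in A^2$, and by the Engel-type theorem quoted above (\cite{Bosko}) this forces $A^2$ to act nilpotently, i.e.\ $A^2$ is nil on $M$.

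For the converse I would first isolate the role of the right multiplications using the defining identities of a Leibniz bimodule (see \cite{Barnes}): $T_{xy}=T_xT_y-T_yT_x$, $\;S_{xy}=T_xS_y-S_yT_x$, and $\;S_y(T_x+S_x)=0$. Set $K=\bigcap_{x\in A}\ker S_x$. The third identity gives $\operatorname{im}(T_x+S_x)\subseteq K$ for all $x$, and since $S_yT_x=-S_yS_x$ one checks that $K$ is $T_x$-invariant; being annihilated by every $S_x$, it is a sub-bimodule on which the right action vanishes, while $S_x\equiv -T_x$ on $M/K$. Consequently any complete flag that is invariant under the left action $T_A$ and passes through $K$ is automatically a flag of sub-bimodules: a flag term inside $K$ is killed by every $S_x$, and a flag term containing $K$ is $S_x$-invariant because $S_x=-T_x$ modulo $K$. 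This reduces the whole problem to simultaneously triangularizing the single family $\{T_x:x\in A\}$ by a flag refining $K$.

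The remaining step is the Lie-theorem core. The image $T_A$ is a Lie subalgebra of $\mathfrak{gl}(M)$ with $[T_A,T_A]=T_{A^2}$, and since $A^2$ is nil on $M$ every operator in $N:=T_{A^2}$ is nilpotent; by Engel's theorem $V:=\{m\in M: Nm=0\}$ is nonzero. Using $T_zT_x=T_xT_z+T_{zx}$ with $zx\in A^2$ one verifies that $V$ is $T_A$-invariant, and on $V$ every commutator $[T_x,T_y]=T_{xy}$ lies in $N$ and hence vanishes, so $\{T_x|_V\}$ is a commuting family. Because each $T_x$ has minimum polynomial splitting into linear factors over $F$, so does each $T_x|_V$, and a commuting family of such operators has a common eigenvector; that eigenvector spans a $T_A$-invariant line. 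Inducting on $\dim M$ (the hypotheses pass to submodules and quotients, as minimum polynomials divide and nilpotence is inherited) yields a complete $T_A$-invariant flag, and running the induction through the invariant subspace $K$ produces the required refinement, completing the converse.

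I expect the main obstacle to be precisely this last paragraph: checking that $V=\ker N$ is $T_A$-invariant and that the induced operators genuinely commute on it, so that the elementary fact about commuting operators with split minimum polynomials applies. The reduction via $K$ is Leibniz-specific bookkeeping, but the conceptual weight---and the only place the field-independent hypothesis on minimum polynomials is used---lies in extracting the common eigenvector for the left action.
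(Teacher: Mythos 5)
Your proof is correct, but both halves take a genuinely different route from the paper's. For the direction (basis $\Rightarrow$ conditions), the paper extracts a one-dimensional submodule from the flag, applies Lemma~\ref{Center} to see that $A^2$ annihilates it, and then inducts on $M/B$; your observation that $T_z$ for $z\in A^2$ is a sum of commutators of upper triangular matrices, hence strictly upper triangular and nilpotent, reaches the same conclusion in one line and is cleaner. For the converse, the paper picks an irreducible submodule $B$, shows $BA^2+A^2B=0$ (else iterating contradicts the nilpotency of the $A^2$-action), and then quotes Barnes's dichotomy for irreducible bimodules ($S_x\equiv 0$ or $S_x\equiv -T_x$ on $B$) to conclude that all $T_x,S_x$ commute on $B$; a commuting family with split minimum polynomials has a common eigenvector, so $\dim B=1$ and induction finishes. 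You instead dispose of the right action globally via $K=\bigcap_x\ker S_x$ and the identity $S_y(T_x+S_x)=0$ --- in effect re-proving the portion of Barnes's lemma that the paper cites --- and then run the Lie's-theorem core (Engel kernel $V=\ker T_{A^2}$, commutativity of the $T_x$ on $V$, common eigenvector) for the left action alone, concatenating the flags of $K$ and $M/K$. The two converses are morally the same, in that both locate a common eigenvector for a family made commutative by the nilpotent action of $A^2$; yours is self-contained on the bimodule side and makes visible that only the left multiplications need the split-minimum-polynomial hypothesis, while the paper's is shorter by outsourcing the $S$/$T$ dichotomy to \cite{Barnes} and by working inside an irreducible submodule rather than with the global subspaces $K$ and $\ker T_{A^2}$. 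The only soft spot is the phrase ``running the induction through $K$,'' which you should expand into the explicit statement that the eigenvector argument is applied separately to the $T_A$-modules $K$ and $M/K$ (both of which inherit the hypotheses), but this is bookkeeping rather than a gap.
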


\vspace{5mm}

We then have

\vspace{5mm}

\begin{corollary}\label{CorBasis}
Let $A$ be a Leibniz algebra over an algebraically closed field and $M$ be an $A$-module. There is a basis for $M$ such that the representing matrices are in upper triangular form if and only if $A^2$ is nil on $M$.
\end{corollary}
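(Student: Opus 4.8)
The plan is to obtain this as an immediate specialization of Theorem~\ref{ThmBasis}. That theorem characterizes the existence of a triangulating basis by two conditions: first, that each linear transformation occurring in the representation has minimum polynomial a product of linear factors; and second, that $A^2$ is nil on $M$. My strategy is to argue that over an algebraically closed field the first condition is automatically satisfied, so that only the second survives as a genuine hypothesis.

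First I would recall that over an algebraically closed field $K$ the only monic irreducible polynomials in $K[x]$ are the linear ones $x-\lambda$, so that every nonconstant polynomial in $K[x]$ factors completely into linear factors. In particular, for each $x \in A$ the minimum polynomials of the representing transformations $T_x$ and $S_x$, viewed as endomorphisms of the finite-dimensional space $M$, are products of linear factors with no hypothesis needed. Thus the minimum-polynomial condition of Theorem~\ref{ThmBasis} is satisfied automatically in this setting.

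With the first condition rendered vacuous, Theorem~\ref{ThmBasis} reduces exactly to the assertion that a triangulating basis for $M$ exists if and only if $A^2$ is nil on $M$, which is precisely the claim of the corollary. I would then simply invoke Theorem~\ref{ThmBasis} to discharge both directions of the equivalence simultaneously, rather than reproving either implication from scratch.

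I do not anticipate a genuine obstacle, since the corollary is a direct reading of the theorem once the base field is specialized. The one point meriting a moment's care is to confirm that the minimum-polynomial hypothesis in Theorem~\ref{ThmBasis} is attached to \emph{all} of the transformations appearing in the representation of $A$ on $M$ (both the left multiplications $T_x$ and the right multiplications $S_x$), so that algebraic closure clears every instance of it at once; once that is verified, nothing further remains to be done.
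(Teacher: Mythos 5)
Your proposal is correct and matches the paper's (implicit) reasoning exactly: the paper states the corollary as an immediate consequence of Theorem~\ref{ThmBasis}, precisely because over an algebraically closed field every minimum polynomial splits into linear factors, leaving only the condition that $A^2$ be nil on $M$. Nothing further is needed.
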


\begin{proof}[Proof of Theorem]
The condition on the minimum polynomials is equivalent to being able to triangulate all the linear transformations. Suppose also that $A^2$ is nil on $M$. Then $T_x$, and hence $S_x$,  is nilpotent for all $x \in A^2$ (\cite{Bosko}) and $A^2$ acts nilpotently on $M$ by Engel's theorem. Let $B$ be an irreducible submodule of $M$. It is enough to show that dim$(B) =1$ for then the result will follow by induction. Since the submodule $BA^2+A^2B$ is contained in $B$, $BA^2+A^2B$ is 0 or $B$. If the latter holds, then $B=BA^2+A^2B=(BA^2+A^2B)A^2+A^2(BA^2+A^2B)=\dots$, which is never 0, a contradiction. Hence $BA^2+A^2B=0$ and $BA^2=0=A^2B$. Therefore, $0=(xy)b=x(yb)-y(xb)$ for all $x, y \in A$, $b \in B$. Hence $x(yb)=y(xb)$ and $T_xT_y=T_y T_x$. Since $B$ is irreducible, either $S_x=0$ for all $x \in A$ or $S_x=-T_x$ for all $x \in A$ \cite{Barnes}. Then either $T_xS_y=-T_xT_y=-T_yT_x=S_yT_x$ or $T_xS_y=0=S_yT_x$ on $B$ and $S_xS_y=T_xT_y=T_yT_x=S_yS_x$ or $S_xS_y=0=S_yS_x$ on $B$. Hence all $T_x$, $S_y$ commute on $B$. Thus these transformations are simultaneously triangularizeable on $B$. Hence dim$(B)=1$ and the result holds in this direction.

\vspace{5mm}

Suppose that there is a basis for $M$ such that all the transformations are triangularizeable on this basis. Then there is a one dimensional submodule $B$ such that $A^2B=0=BA^2$ by the lemma. $A^2$ is nil on $M/B$ by induction and hence on $M$. Since each transformation is triangularizeable, the minimum polynomials have linear factors.
\end{proof}

\vspace{5mm}

We state these results when $A$ is acting on itself. $A$ is called strongly solvable if $A^2$ is nilpotent (\cite{Bowman}, \cite{Burch}). $A$ is supersolvable if there is an increasing chain of ideal of $A$, each of codimension 1 in the next. 

\vspace{5mm}

\begin{theorem}\label{Supersolvable}
Let $A$ be a Leibniz algebra over an algebraically closed field. Then $A$ is supersolvable if and only if $A$ is strongly solvable.
\end{theorem}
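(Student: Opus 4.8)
The plan is to route both implications through the equivalence \emph{$A$ is triangulable on itself $\iff$ $A^2$ is nilpotent}, which follows from the Corollary. Since $F$ is algebraically closed we have $K = F$, so triangulability on a module $M$ means exactly that there is a single basis of $M$ in which every representing matrix $T_x$ and $S_x$ is upper triangular. Applying the Corollary with $M = A$ gives that $A$ is triangulable on itself if and only if $A^2$ is nil on $A$. Because $A^2$ is an ideal, and the excerpt records that an ideal is nil on $A$ precisely when it is nilpotent, this reads: $A$ is triangulable on itself $\iff$ $A^2$ is nilpotent $\iff$ $A$ is strongly solvable. It therefore remains only to identify supersolvability with triangulability on itself.

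For supersolvable $\Rightarrow$ strongly solvable I would start from an ascending chain of ideals $0 = A_0 \subset A_1 \subset \cdots \subset A_n = A$ with $\dim(A_{i+1}/A_i) = 1$. Each $A_i$, being a two-sided ideal, satisfies $AA_i \subseteq A_i$ and $A_i A \subseteq A_i$, so the whole flag is invariant under every left and right multiplication. A basis adapted to this flag then simultaneously upper-triangularizes all the $T_x$ and $S_x$, so $A$ is triangulable on itself, and the reduction above yields that $A^2$ is nilpotent. For the converse, if $A^2$ is nilpotent then the reduction makes $A$ triangulable on itself, giving a basis $e_1,\dots,e_n$ in which all $T_x$ and $S_x$ are upper triangular. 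Setting $A_i = \langle e_1,\dots,e_i\rangle$, invariance under every $T_x$ gives $AA_i \subseteq A_i$ and invariance under every $S_x$ gives $A_i A \subseteq A_i$, so each $A_i$ is an ideal; consecutive terms differ in dimension by one, so $A$ is supersolvable.

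Once the Corollary is in hand the argument is largely bookkeeping, and the one point that genuinely needs care is the dictionary between the two languages: that a submodule of the adjoint representation is exactly a two-sided ideal, and that simultaneous upper-triangularizability of the \emph{full} family $\{T_x, S_x\}$ carries the same data as a complete flag of such ideals. I expect the main obstacle to be verifying that the invariance forced by upper-triangular form produces genuinely \emph{two-sided} ideals, both $AA_i \subseteq A_i$ and $A_i A \subseteq A_i$; this is precisely why the Corollary must be applied to the entire representation, including the right multiplications $S_x$, and not merely to the left multiplications $T_x$.
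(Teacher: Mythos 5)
Your proof is correct and follows essentially the same route as the paper: apply Corollary \ref{CorBasis} with $M=A$ to get that triangulability on itself is equivalent to $A^2$ being nil on $A$, hence to $A^2$ being nilpotent, and identify supersolvability with triangulability on itself. The only difference is that you spell out the flag argument for the latter identification (that a complete flag of two-sided ideals is the same data as a basis simultaneously upper-triangularizing all $T_x$ and $S_x$), which the paper simply asserts in one line.
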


\begin{proof}
$A$ is supersolvable if and only if $A$ triangulable on itself. By Corollary \ref{CorBasis}, this is equivalent to $A^2$ being nil on $A$ which in turn is the same as $A^2$ being nilpotent. The last statement is the definition of strongly solvable.  
\end{proof}

\vspace{5mm}

The general case looks like the following.

\vspace{5mm}

\begin{theorem}\label{Triangulable}
$A$ is triangulable if and only if $A^2$ is nil on $A$.
\end{theorem}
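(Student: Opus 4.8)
The plan is to reduce the statement to Corollary \ref{CorBasis} by passing to the algebraic closure $K$, and then to check that the relevant nil condition is insensitive to extension of scalars. First I would unwind the definition of triangulable: by definition $A$ (acting on itself) is triangulable precisely when the induced representation of $K \otimes A$ on $K \otimes A$ admits a basis of upper triangular matrices. Since $K$ is algebraically closed, Corollary \ref{CorBasis}, applied to the Leibniz algebra $K \otimes A$ acting on the module $K \otimes A$, says this happens if and only if $(K \otimes A)^2$ is nil on $K \otimes A$. Thus the theorem is equivalent to
\[
(K \otimes A)^2 \text{ is nil on } K \otimes A \quad\Longleftrightarrow\quad A^2 \text{ is nil on } A .
\]

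Next I would record the elementary identity $(K \otimes A)^2 = K \otimes A^2$, which follows from $K$-bilinearity of the bracket on $K \otimes A$. Because $A^2$ and $K \otimes A^2$ are ideals, the remark in the introduction that an ideal is nil on the algebra if and only if it is nilpotent lets me replace both nil conditions by nilpotency. The claim to be proved therefore becomes purely algebraic and field-theoretic: $A^2$ is nilpotent if and only if $K \otimes A^2$ is nilpotent, with no further reference to the module structure or to triangulation.

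The main obstacle is exactly this compatibility of nilpotency with extension of scalars, and it is the only step requiring genuine care. Writing $\gamma_n(B)$ for the $n$-th term of the lower central series of a Leibniz algebra $B$, I would verify that the lower central series of $K \otimes A^2$ is obtained from that of $A^2$ by tensoring with $K$, that is $\gamma_n(K \otimes A^2) = K \otimes \gamma_n(A^2)$ for every $n$; this reduces to the distributivity of $K \otimes (-)$ over the bracket and over sums. Since $K \otimes (-)$ is faithful, $\gamma_n(A^2) = 0$ holds if and only if $\gamma_n(K \otimes A^2) = 0$, giving both directions of the equivalence at once. (As a sanity check, the ``easy'' direction is visible directly: if every $T_x$ with $x \in K \otimes A^2$ is nilpotent then in particular $T_x^m = 0$ for $x \in A^2 \subseteq K \otimes A^2$, and restricting to the subspace $A \subseteq K \otimes A$ shows $A^2$ is nil on $A$.) Chaining the equivalences—definition of triangulable, Corollary \ref{CorBasis} over $K$, the identity $(K \otimes A)^2 = K \otimes A^2$, the ideal/nilpotency remark, and the base-change compatibility of the lower central series—then closes the argument and establishes the theorem.
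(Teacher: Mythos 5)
Your proposal is correct and follows essentially the same route as the paper: pass to $K\otimes A$, invoke Corollary \ref{CorBasis} (the paper phrases this step through Theorem \ref{Supersolvable}, which is just that corollary applied to $A$ acting on itself), identify $(K\otimes A)^2$ with $K\otimes A^2$, and use the equivalence of ``nil on the algebra'' with nilpotency for ideals together with the base-change invariance of nilpotency. Your explicit verification that $\gamma_n(K\otimes A^2)=K\otimes\gamma_n(A^2)$ merely spells out what the paper compresses into ``the converse holds by reversing the steps.''
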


\begin{proof}
Let $A^*=K \bigotimes A$ where $K$ is the algebraic closure of the base field for $A$. If $A^2$ is nil on $A$, then $A^2$ is nilpotent as is $(A^2)^*=(A^*)^2$. Hence $A^*$ is supersolvable by the last result and $A$ is triangulable by definition. The converse holds by reversing the steps.
\end{proof}

\vspace{5mm}

A property is $n$-recognizeable in a class of algebras if whenever all $n$ generated subalgebras have the property, then the algebra has the property. The concept when $n=2$ has been considered in groups, Lie algebras and Leibniz algebras for classes that are solvable, supersolvable, nilpotent and strongly solvable (see \cite{Brandl}, \cite{Burch}, \cite{Moneyhun}). A version of this has also been considered for triangulable in the Lie case in \cite{Bowman}. We considered it as an application of our result.

\vspace{5mm}

\begin{theorem}\label{TriRec}
In the class of solvable Leibniz algebras, triangulable is 2 recognizable.
\end{theorem}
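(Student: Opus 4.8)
The plan is to convert both the hypothesis and the conclusion from statements about triangulability into statements about strong solvability, and then to invoke the already-established 2-recognizability of strong solvability. The bridge is Theorem \ref{Triangulable} together with the fact recorded in the introduction that an ideal $I$ of a Leibniz algebra is nil on the algebra precisely when $I$ is nilpotent. Combining these, a Leibniz algebra $C$ is triangulable if and only if $C^2$ is nilpotent, i.e.\ if and only if $C$ is strongly solvable. The decisive feature of this criterion is that it is intrinsic to $C$ and independent of the ground field, even though triangulability itself is defined through the extension $K \otimes C$. I would open the proof by stating this equivalence explicitly as the working criterion, noting that it applies to every Leibniz algebra and requires no solvability assumption.

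Next I would apply the criterion at both levels of the problem. Let $A$ be solvable and suppose every 2-generated subalgebra of $A$ is triangulable. Any 2-generated subalgebra $B$ is a subalgebra of a solvable algebra, hence solvable, and by the criterion ``$B$ triangulable'' means exactly ``$B$ strongly solvable.'' Thus the hypothesis is equivalent to the assertion that every 2-generated subalgebra of $A$ is strongly solvable. Since strong solvability is known to be 2-recognizable in the class of solvable Leibniz algebras (\cite{Burch}), it follows that $A$ itself is strongly solvable, and applying the criterion once more, now to $A$, yields that $A$ is triangulable, as desired.

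The genuinely substantive step is the passage from ``all 2-generated subalgebras strongly solvable'' to ``$A$ strongly solvable,'' and this is exactly where the restriction to the solvable class is used; I would lean on \cite{Burch} for it rather than reprove it. If one instead wanted a self-contained argument, the main obstacle would be to show directly that $A^2$ is nil on $A$ --- equivalently that $T_z$ is nilpotent for every $z \in A^2$ --- using only information coming from pairs of generators, since an element of $A^2$ need not lie in the square of the 2-generated subalgebra in which it naturally sits, so the hypothesis does not bear on $T_z$ in any immediate way. The only remaining care in the write-up is to confirm that the field-independence of the ``$C^2$ nilpotent'' criterion allows the subalgebra-level equivalence to be read over the original field $F$, so that the cited result on strong solvability applies verbatim.
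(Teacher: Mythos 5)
Your proposal is correct and follows essentially the same route as the paper: translate triangulability of each 2-generated subalgebra into strong solvability via Theorem \ref{Triangulable} (and the equivalence of ``nil on the algebra'' with nilpotence for ideals), invoke the 2-recognizability of strong solvability from \cite{Burch}, and translate back. Your added remarks on field-independence and on why a self-contained argument would be harder are sensible but do not change the argument's structure.
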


\begin{proof}
Let $N$ be a 2-generated subalgebra of $A$. Since $N$ is triangulable, $N^2$ is nil on $N$ and, thus, $N$ is strongly solvable. Therefore $A$ is strongly solvable by \cite{Burch} and $A^2$ is nil on $A$. Then, using Theorem \ref{Triangulable}, $A$ is triangulable.
\end{proof}

\vspace{5mm}

The result can be extended from solvable to all algebras in characteristic 0, again using a result in \cite{Burch}.

\vspace{5mm}
\begin{corollary}\label{TriChar}
For Leibniz algebras, triangulable is 2-recognizeable at characteristic 0.
\end{corollary}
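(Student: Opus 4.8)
The plan is to reduce the statement to the solvable case, which is exactly Theorem \ref{TriRec}. The only feature of $A$ not yet guaranteed by the hypothesis is solvability itself; once $A$ is known to be solvable, the assumption that every 2-generated subalgebra is triangulable, combined with Theorem \ref{TriRec}, delivers the conclusion at once. Thus the entire task is to prove that $A$ is solvable.

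First I would unpack what triangulability of a 2-generated subalgebra buys us. Let $N$ be any 2-generated subalgebra of $A$. Since $N$ is triangulable, Theorem \ref{Triangulable} gives that $N^2$ is nil on $N$, so $N^2$ is nilpotent and $N$ is strongly solvable; in particular each such $N$ is solvable. Hence every 2-generated subalgebra of $A$ is solvable.

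Next I would invoke the fact, established in \cite{Burch}, that in characteristic 0 solvability of Leibniz algebras is 2-recognizable: an algebra all of whose 2-generated subalgebras are solvable is itself solvable. Applying this to $A$ shows that $A$ is solvable. With solvability in hand, Theorem \ref{TriRec} applies verbatim and yields that $A$ is triangulable.

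The step I expect to carry the real weight is the solvability reduction, since it is the one point where characteristic 0 is essential: 2-recognizability of solvability fails in positive characteristic, so this argument genuinely cannot be pushed outside characteristic 0. Everything else is a transcription of the equivalences triangulable $\Leftrightarrow$ $A^2$ nil $\Leftrightarrow$ strongly solvable already recorded above.
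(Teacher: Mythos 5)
Your proof is correct and pursues the same overall strategy as the paper --- establish that $A$ is solvable and then quote Theorem \ref{TriRec} --- but the step that produces solvability is handled differently. You pass from ``every 2-generated subalgebra is triangulable'' to ``every 2-generated subalgebra is strongly solvable, hence solvable'' via Theorem \ref{Triangulable}, and then invoke the 2-recognizability of \emph{solvability} at characteristic 0 from \cite{Burch} to conclude $A$ is solvable. The paper instead runs a minimal counterexample argument: a simple counterexample is impossible because simple Leibniz algebras are Lie algebras and simple Lie algebras are 2-generated (so $A$ would be triangulable by hypothesis), while a non-simple counterexample has a proper ideal $I$ to which the 2-recognizability of \emph{strong solvability} (Theorem 3 of \cite{Burch}) applies, making $I$ and $A/I$ strongly solvable and hence $A$ solvable. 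Your route is shorter and avoids the case analysis, at the cost of needing the stronger citation that solvability itself is 2-recognizable for Leibniz algebras in characteristic 0; the paper's abstract indicates this is indeed established in the literature, so the citation is legitimate, and in fact the paper's argument is essentially a re-derivation of that very fact specialized to the situation at hand. You also correctly identify that characteristic 0 enters only through this solvability reduction.
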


\begin{proof}
Let $A$ be a minimal counterexample. Simple Leibniz algebras are Lie algebras. Simple Lie algebras are 2-generated, hence $A$ is triangulable. If $I$ is a proper ideal in $A$, then all 2-generated subalgebras of $I$ and $A/I$ are strongly solvable by Theorem \ref{Triangulable} and $I$ and $A/I$ are strongly solvable by Theorem 3 of \cite{Burch} and $A$ is solvable. The result holds by Theorem \ref{TriRec}.
\end{proof}

\vspace{5mm}

\end{document}